\newtheorem{thm}{Theorem}[subsection]
\newtheorem{lemma}[thm]{Lemma}
\theoremstyle{definition}
\newtheorem{defn}[thm]{Definition}
\newtheorem{remark}[thm]{Remark}
\newcommand{\Q}{\mathbb{Q}}
\newcommand{\Z}{\mathbb{Z}}
\newcommand{\C}{\mathbb{C}}
\newcommand{\T}{\mathbb{T}}
\newcommand{\R}{\mathbb{R}}
\newcommand{\unit}{\mathbf{1}}
\newcommand{\cC}{\mathcal{C}}
\newcommand{\cA}{\mathcal{A}}
\newcommand{\cB}{\mathcal{B}}
\newcommand{\cM}{\mathcal{M}}
\newcommand{\cE}{\mathcal{E}}
\newcommand{\tensor}{\otimes}
\newcommand{\Ind}{\mathrm{Ind}}
\newcommand{\id}{\mathrm{id}}
\newcommand{\catF}[1]{\mathrm{Ab}(#1)}
\newcommand{\catN}[1]{\mathrm{\mathcal{A}}(#1)}
\newcommand{\Rmod}{\text{$\R$-{\sf Mod}}}
\newcommand{\Nori}{\mathrm{Nori}}
\newcommand{\good}{\mathrm{good}}
\title{Tensor product of motives via K\"unneth formula}
\author{Luca Barbieri-Viale}
\address{Dipartimento di Matematica ``F. Enriques", Universit{\`a} degli Studi di Milano\\ Via C. Saldini, 50\\ I-20133 Milano\\ Italy}
\email{luca.barbieri-viale@unimi.it}
\author{Mike Prest}
\address{Department of Mathematics, Alan Turing Building, University of Manchester, Manchester M13 9PL, UK}
\email{mprest@manchester.ac.uk}
\subjclass [2010]{14F99; 18E10; 18C10; 03C60}
\keywords{Cohomology theory; motives; tensor category}
\begin{document}

\begin{abstract}
Following Nori's original idea we here provide certain motivic categories with a canonical tensor structure. These motivic categories are associated to a cohomological functor on a suitable base category and the tensor structure is induced by the cartesian tensor structure on the base category via a cohomological K\"unneth formula.
\end{abstract}
\maketitle
\section*{Introduction}
We here make use of Nori's approach to motives by considering the universal abelian category ${\sf ECM}_\cC^H$  corresponding to a cohomology $H$ on a given category $\cC$ which is endowed with some good geometric properties, as reformulated and generalised in \cite{BV} and \cite{BVP}. Making use of \cite{BVHP} we show that a suitable K\"unneth formula for $H$, along with a cellularity condition, provide ${\sf ECM}_\cC^H$ with a canonical tensor structure: this is our main theorem, see Theorem~\ref{thm:Tmot} below. In particular, this framework applies to Nori's original category of motives and to several other geometric situations, e.g., it applies to \cite{Ar}, \cite{I1}, \cite{I2},  \cite{FJ} and \cite{NP}. Recall that Nori's category, i.e., the category of (effective,  cohomological) Nori motives over a subfield $k$ of the complex numbers, is obtained in this way as ${\sf ECM}_\cC^H$ for $H = H_{\rm sing}$  singular cohomology and $\cC$ being the category of (affine) schemes over the field $k$ (e.g., see \cite{HMS} for details on Nori's original construction and \cite{BCL} for its reconstruction).  

\subsection*{Cohomology theories} Following \cite{BV}, a cohomology theory on a given (small) category $\cC$, regarded as a category of geometric objects, can be increasingly specified by imposing conditions which reflect the geometric properties of $\cC$ and which we want to hold true for those $H$ that we wish to regard as cohomologies.  Some of these conditions may be expressed as axioms within the framework of a formal language.  As a starting point, a cohomology theory should express the basic idea of associating $(X,Y)\leadsto \{H^n (X,Y)\}_{n\in\Z}$ in such a way that 
\begin{itemize}
\item[{\it i)}] $H = \{H^n\}_{n\in\Z}$ is a family of contravariant functors  $H^n : \cC^{\square}\to \cA$  on a suitable category of pairs $\cC^\square$, taking values in an abelian category $\cA$,
\item[{\it ii)}] for the triple $(X,Y)$, $(X,Z)$ and $(Y,Z)$ we have an extra connecting morphism $\delta^n : H^n (Y,Z)\to H^{n+1} (X,Y)$, and
\item[{\it iii)}] we have the following long exact sequence
\begin{equation}\label{longexact}
\cdots\to H^n (X,Y)\to H^n (X,Z)\to H^n (Y,Z)\to H^{n+1} (X,Y)\to \cdots
\end{equation}
which is also assumed to be natural in a canonical way (see \cite[\S 3.1]{BV} for more details). 
\end{itemize} 

Note that we may weaken the additivity assumption on $\cA$, as we may appeal to the notion of exact category (in the sense of Barr \cite{Ba}) where we still can talk of exact sequences of pointed objects. For example, $\cA$ can be the category of sets or that of groups, these being Barr-exact. In fact, to express this basic idea of a cohomology $H: \cC^{\square}\to \cA$, the assumptions on $\cA$ can be weakened further: we just need a regular category (see also \cite{Ba} for this notion), i.e., a category with finite limits where every arrow factorises as a regular epi followed by a mono and these factorisations are stable under pullback. Recall that regular epis are just coequalizers of some parallel pair of morphisms, and, in a regular category, the regular epi-mono factorisation is unique, so we can talk of the image of a morphism. 

There is a general key relation between regular categories and logic, which we outline (see, e.g., \cite{Bu} for more detail). 

The first important fact is that regular categories are well suited for the interpretation of any regular theory. A regular theory $\R$ is a set of sequents $\top\vdash_{x,y} \varphi \rightarrow  \psi$, also called axioms, where $\varphi$ and $\psi$ are regular formulas involving sorts for which $x$ and $y$ are terms and/or variables. The regular formulas (also called positive primitive formulas) are those (first order) formulas built up from the atomic formulas (= equations, in any algebraic theory) using only meet $\wedge$ (``and") and existential quantification $\exists$. For example, to express exactness in the long exact sequence \eqref{longexact} displayed above we may include the axioms $\top\vdash_{y} fg(y) = * $ and $\top\vdash_{x}  f (x) = * \rightarrow (\exists y) g (y)= x$ for any composable function symbols $g$ and $f$ and where $*$ is the constant of each sort ($0$ in the additive case). In the case of an abelian category, and using an alternative notation, these axioms would read $\forall y\,\, fg(y)=0$ and $\forall x \, (f(x)=0 \rightarrow \exists y (g(y)=x))$.

Now, given a regular theory $\R$, we can interpret it in any regular category $\cA$; any such interpretation in which every sequent in $\R$ is valid is said to be a model of the theory (e.g., see \cite[\S 3]{Bu}). A functor $F: \cA\to \cB$ between regular categories is called exact (or regular) if it preserves finite limits and regular epis. It follows that models are preserved by exact functors.   We can form the category of models $\Rmod (\cA)$ of $\R$ in $\cA$, and this gives a functor $\Rmod ( - )$ on the category of regular categories with exact functors. The second important fact is that this functor is representable.  That is, given $\R$, there exists a regular category $\cC_{\R}^{\rm reg}$ and an equivalence between the models in a regular category $\cA$ and the exact functors from $\cC_{\R}^{\rm reg}$ to $\cA$, i.e., we have
\begin{equation}\label{repreg}
\Rmod (\cA) \cong {\sf Ex} (\cC_{\R}^{\rm reg}, \cA )
\end{equation}
which is natural in $\cA$ (e.g., see \cite[\S 6]{Bu} for details). The category $\cC_{\R}^{\rm reg}$ is the so-called regular syntactic category for $\R$ and the model $U$ in  $\Rmod (\cC_{\R}^{\rm reg})$ corresponding to the identity functor on $\cC_{\R}^{\rm reg}$  is called the universal (or generic) model. A third fact is that any regular category is the regular syntactic category of some regular theory (see \cite[\S 5]{Bu}). 

Finally, let $\cC_{\R}^{\rm reg/ex}$ be the canonical (Barr-)exact completion (e.g., see \cite{La}) of the syntactic category; so
\begin{equation}\label{regex}
{\sf Ex} (\cC_{\R}^{\rm reg}, \cA )\cong {\sf Ex} (\cC_{\R}^{\rm reg/ex}, \cA )
\end{equation}
whenever $\cA$ is exact. Note that the equivalence in \eqref{regex} is natural in $\cA$ (and implicitly defines what we mean by exact completion).  Since the exact completion preserves additivity we have that $\cC_{\R}^{\rm reg}$ additive yields $\cC_{\R}^{\rm reg/ex}$ abelian. Indeed, Tierney's theorem is that an exact additive category $\cA$ is abelian and conversely.

These are the main facts arising from the general theory of regular categories that we will apply to the study of cohomology. In \cite[\S 2]{BV}, for a fixed base category $\cC$, a {\em regular cohomology theory} $\T^{op}$ consists of the initial set of regular axioms, which express the properties {\it i) -- iii)}\, on any 
$H = \{H^n\}_{n\in \Z}$ as indicated above: any such specific cohomology $H$ is then referred to as a model of the theory $\T^{op}$; that is $H$ is an interpretation of the axioms in some category $\cA$ which should at least be regular. So, in the following, saying that $H$ is a \emph{$\T^{op}$-model} is synonymous with saying that $H$ is a \emph{cohomology}. 

\subsection*{Effective cohomological motives} Therefore, following \cite{BV}, for a fixed category $\cC$ a category of {\em effective constructible $\T^{op}$-motives} can be defined as follows
\begin{equation}\label{deftmot}
\cA[\T^{op}] := \cC_{\T^{op}}^{\rm reg/ex}
\end{equation}
via the exact completion of the regular syntactic category associated to the regular cohomology theory $\T^{op}$ mentioned above.
For any cohomology $H: \cC^{\square}\to \cA$ with $\cA$ exact we get an induced realisation exact functor $r_H : \cA[\T^{op}]\to \cA$ as it follows directly from \eqref{repreg} and \eqref{regex} above. 
Moreover, for any such cohomology $H$ we then get the regular theory $\T_H^{op}$ of that model, a richer cohomology theory obtained by adding to $\T^{op}$ all regular axioms which are valid in $H$, and, therefore, we obtain another universal category 
$\cA[\T_H^{op}]$  (see \cite[\S 3 \& \S 4]{BV},  \cite[\S 2.2]{BVP}, \cite{BCL} and the next \S 1.1 for more details). Note that we always have the larger category of $\T^{op}_H$-motives $\Ind (\cA[\T_H^{op}])$ provided by indization as well as an even larger $\T_H^{op}$-motivic topos $\cE [\T^{op}_H]$ given by the topos of sheaves on $\cA[\T^{op}_H]$ regarded as a site for the regular topology (see \cite{BV}).

Actually, as explained in \cite{BVP}, for any cohomology $H: \cC^{\square}\to \cA$ where $\cA$ is an abelian category, there is a canonical equivalence
\begin{equation}\label{motives}
\cA[\T_H^{op}]\cong \cA (H)
\end{equation}
with the universal abelian category $\cA (H)$ given by $H$ regarded as a representation of Nori's diagram $D^\Nori$ associated to $\cC^\square$ (see \cite[Cor. 2.9]{BVP}). This latter category $\cA (H)$ is obtained by a direct application of Freyd's universal construction (see \cite{Frey}) and Serre localisation. For instance, one has Freyd's universal abelian category ${\rm Ab} (D^\Nori)$ on the preadditive category generated by the diagram $D^\Nori$ and then an exact functor $F_H : {\rm Ab} (D^\Nori)\to \cA$ which is induced by the representation $H$ of $D^\Nori$ in $\cA$; then one obtains $\cA (H)$ as the Serre quotient of ${\rm Ab} (D^\Nori)$ by the kernel of $F_H$. Therefore, following Nori's idea, in the additive case, we may simply refer to both the abelian categories in \eqref{motives} as the \emph{category of (effective, cohomological) motives} associated to $\cC$, a chosen category of geometric objects, and $H$, a paradigmatic cohomology, and we shall denote it by
\begin{equation}\label{Nori}
{\sf ECM}_\cC^H
\end{equation} following Nori's notation. It may well be that a different cohomology $H'$ gives rise to an equivalent category of motives ${\sf ECM}_\cC^H\cong {\sf ECM}_\cC^{H'}$. The ambiguity in the choice of the cohomology is somewhat the motivic analogue of the Tannakian formalism of fibre functors even if we cannot get for free any tensor structure on this category of motives.

\subsection*{Tensor product of motives} In order to get a canonical tensor structure on the category ${\sf ECM}_\cC^H$ we need to appeal to an additional K\"unneth formula for the cohomology $H$, i.e., a K\"unneth formula for the $\T^{op}$-model $H$. First we need to consider the restriction of the cohomology $H$ to {\em good pairs}\, (see Definition~\ref{gp}). We can then express {\em cellularity}\, of $H$ (see Definition~\ref{cell}), a condition which ensures that the cohomology $H$ is determined by its values on good pairs. For the full subcategory $\cC^\good\subseteq \cC^\square$, of good pairs for $H$, we consider $H^\good$, the cohomology $H$ restricted to $\cC^\good$, and we show that there is an equivalence of abelian categories 
\begin{equation}\label{good}
\cA (H)\cong\cA (H^\good)
\end{equation}
as a consequence of the cellularity assumption (see Lemma~\ref{lemma:goodpair}).
For any cohomology $H$ which is {\em cellular and satisfies the K\"unneth formula} (see Definition~\ref{kf}), we thus can provide the abelian category of motives ${\sf ECM}_\cC^H$ with a canonical tensor structure, by making use of the results of \cite{BVHP}, see our Theorem~\ref{thm:Tmot} for the precise statement.  Actually, all this is just following Nori's original idea but the constructions are completely different and fairly general.  The key assumptions on $H$ are made just in order to transfer the cartesian structure of $\cC^\square$ to a canonical tensor structure on $\cA[\T_H^{op}]$ via the cohomology $H$ regarded as a representation of the corresponding subquiver
$D^\good$ of $D^\Nori$, as from \eqref{motives} and \eqref{good} we get
\begin{equation}\label{end}
\cA[\T_H^{op}]\cong \cA (H^\good)
\end{equation}
and $\cA (H^\good)$ has a canonical tensor structure. Note that the cartesian structure of the category $\cC^\good$ of good pairs is obtained by the K\"unneth formula and the tensor structure on $\cA (H^\good)$ is given by $H^\good$ regarded as a $\otimes$-representation of the restricted Nori (graded) $\otimes$-diagram $D^\good$ (see \cite[Thm. 2.18]{BVHP}). Freyd's universal abelian category ${\rm Ab} (D^\good)$ carries an induced right-exact tensor structure which is also universal with respect to tensor functors 
in an abelian tensor category $\cA$ with a right exact tensor product. Therefore, such an assumption on the category $\cA$ where our cohomology $H$ is taking values has to be satisfied. This sheds some light on the geometric meaning of K\"unneth formulas in the additive case but also raises the question of getting a canonical tensor structure directly on $\cA[\T_H^{op}]$ without making reference to Freyd's universal construction, i.e., not using the equivalence \eqref{end}. 

Finally, all this clearly applies to the construction of effective cohomological Nori motives for several different (new and old) geometric categories. 

\subsubsection*{Acknowledgements} We would like to thank A. Huber for helpful discussions on some of the matters treated herein and the referee for his/her suggestions that considerably improved the exposition.

\section{Good pairs and cellularity} We here assume to be given a base category $\cC$ along with a subcategory $\mathcal{M}$ of distinguished monomorphisms, containing all isomorphisms of $\cC$, and saturated, in the sense that if the composition of a distinguished monomorphism with a morphism is a distinguished monomorphism then the morphism is a distinguished monomorphism.   We consider $\cC^{\square}$, the category whose objects are the arrows in $\mathcal{M}$ and whose arrows are commutative squares in $\cC$. We adopt the conventions of \cite[\S 2.1]{BV} regarding this category.  For example we shall denote by $(X, Y)$ the object of $\cC^{\square}$ which is a monomorphism $Y\to X$ in $\mathcal{M}$.  We also assume the hypotheses of \cite[\S 4.4]{BV} on the subcategory $\cM$. In particular, we assume that the distinguished monomorphisms are stable under direct and inverse images (in the sense of \cite[Remark 4.4.1]{BV}).  Also assume that we have joins $Y\cup Z$ of $\mathcal{M}$-subobjects $Y,Z$ of any object $X$, a strict initial object $\emptyset$ of $\cC$ and that $\emptyset\to X$ is in $\mathcal{M}$ for each object $X$ of $\cC$. The key examples are given by $\cC$ being a category of nice topological spaces or schemes (over a base) and $\mathcal{M}$ the subcategory of closed subspaces or subschemes.

We consider the regular cohomological theory $\T^{op}$ on the signature $\Sigma^{op}$ as defined in \cite[Def. 2.3.3]{BV}. The signature $\Sigma^{op}$ of the theory $\T^{op}$ contains sorts $h^n(X, Y)$ for $n\in \Z$ and $Y\hookrightarrow X\in \mathcal{M}$, function symbols $\square^n : h^n(X', Y')\to h^n(X, Y)$ for all arrows $\square: (X,Y) \to (X', Y')$ in $\cC^{\square}$ and additional function symbols $\partial^n :h^n (Y,Z)\to h^{n+1} (X,Y)$ associated to the morphism $\partial : (Y, Z)\to (X, Y)$ in $\cC^{\square}$ given by any pair of composable arrows $Z\hookrightarrow Y\hookrightarrow X$ in $\mathcal{M}$. Such a theory $\T^{op}$ gives rise to the universal abelian category $\cA [\T^{op}]$ introduced in \eqref{deftmot} above, i.e., the category of constructible effective $\T^{op}$-motives (see \cite[\S 4]{BV}). Note that there is also a homological regular theory $\T$ which yields a universal abelian category $\cA [\T]$ and we actually have a duality equivalence
$$\cA [\T]^{op}\cong \cA [\T^{op}]$$
as proven in \cite[Prop. 4.1.4]{BV}. This means that the abelian category of theoretical motives associated with the cohomology theory is just given by the opposite of that for the homology theory. 

We may further assume the existence of an interval object $I^+$ in $\cC$ and then add the regular axiom of $I^+$-invariance and we still get a regular theory $\T^+$ (see \cite[Def. 2.5.1 \& \S 3.8]{BV}). Adding the axiom of $cd$-exactness, which translates Mayer-Vietoris (see \cite[Def. 2.5.2 \& Lemma 3.8.2]{BV}), also provides a regular theory. Moreover, in order to get that $\cA [\T^{op}]$ is an abelian category we here have to assume that each $h^n(X, Y)$ is an abelian group (see \cite[Lemma 4.1.1 \& Prop. 4.1.3]{BV}). A non additive version of the cohomology theory, which we may also denote $\T^{op}$, is directly obtained by weakening the algebraic structure of $h^n(X, Y)$, e.g., removing the assumption that the group is abelian. In the non additive case the corresponding category $\cA [\T^{op}]$ shall be (Barr) exact only. 

Finally, for any such regular theory $\T'$ obtained by adding or removing regular axioms from $\T^{op}$, to give a $\T'$-cohomology $H$ on $\cC$ with values in an abelian (or just  exact) category $\cA$,  i.e., a $\T'$-model $H$ in $\cA$, is equivalent to giving an exact functor $$r_H: \cA [\T']\to \cA$$ which is the so-called realisation functor (see \cite[Prop. 4.1.3 \& Def. 4.1.5]{BV}). This property says that $\cA [\T']$ is universal with respect to such a $\T'$-models.  In fact, this is essentially an instance of the general theorem \cite[Thm. 6.5]{Bu} on regular theories.

\subsection{The theory of a model}
A model $H$ of the theory $\T^{op}$ in an abelian (or  exact) category $\cA$ (see \cite[\S 3]{BV}), a cohomology $H$ for short, is given by $$\{H^n (X,Y)\}_{n\in\Z}\in \cA$$ a $\Z$-indexed family of objects which is a family of contravariant functors
 $$H: \cC^{\square}\to \cA \hspace*{0.5cm} (X,Y)\leadsto \{H^n (X,Y)\}_{n\in\Z}$$
and, further, $H$ is required to satisfy the exactness axiom involving $\partial$, i.e., for any pair of composable arrows $Z\hookrightarrow Y\hookrightarrow X$ in $\mathcal{M}$, we have the long exact sequence \eqref{longexact} of the triple in $\cA$, which is natural (with respect to $\partial$-cubes, see \cite[\S 3.1 \& (2.1)]{BV}). It follows that $H^n (X,Y)=0$ if $Y\cong X$ is an isomorphism (see \cite[Lemma 3.1.1]{BV}). We shall denote $H^n(X)= H^n(X,\emptyset )$ and note that $H^n(\emptyset) =0$.

Furthermore, we may consider the regular theory $\T_H^{op}$ of the model $H$, i.e.,  the theory obtained from $\T^{op}$ by adding all regular axioms which are satisfied by the specific cohomology $H$; these additional axioms are all properties of $H$ that can be expressed by making use of regular sequents on the signature $\Sigma^{op}$. For example, if $H$ satisfies $I^+$-invariance (in the sense of \cite[Def. 2.5.1]{BV}) then this axiom is automatically included in the theory $\T_H^{op}$ as it can be formulated by the equality of two function symbols. 

Suppose now that excision holds for the cohomology $H$, i.e., for $Y\hookrightarrow  X$ and $Z\hookrightarrow  X$ such that $X = Y\cup Z$ we have that $\triangle : (Y, Y\cap Z)\to (X,Z)$ induces
$$\triangle^n_H : H^n(X,Z)\stackrel{\simeq}{\longrightarrow} H^n(Y, Y\cap Z)$$
an isomorphism in $\cA$ for all $n\in\Z$.  Since we have the function symbol $\triangle^n :  h^n(X,Z)\to h^n(Y, Y\cap Z)$ in the signature $\Sigma^{op}$, excision can be expressed in the language of the theory $\T^{op}$ by the following regular sequents:  $\top\vdash_{x}  \triangle^n (x) = * \rightarrow x = *$ and $\top\vdash_{y} y=y \rightarrow  (\exists x) \triangle^n (x)= y$. The validity of these sequents for a model $H$ is equivalent to the fact that $\triangle^n_H$ is an isomorphism of group objects in the regular category $\cA$ (by using \cite[I.5.11]{Ba}). Therefore, adding these axioms to the theory $\T^{op}$ is guaranteeing that the inverse of $\triangle^n$ exists in any model; conversely, if the cohomology $H$ satisfies excision we then have these sequents in the theory $\T_H^{op}$ of the model $H$.

However, this axiom is not equationally expressible in the theory $\T_H^{op}$; for that we would have to add a function symbol $h^n(Y, Y\cap Z)\to h^n(X,Z)$ corresponding to the inverse of $\triangle^n$ (that is not included in $\Sigma^{op}$).
As usual, if excision holds true for $H$, we obtain the following Mayer-Vietoris exact sequence $$\cdots\to H^n(X) \to H^n(Y)\oplus H^n(Z)\to H^n (Y\cap Z) \to H^{n+1} (X) \to \cdots$$
but, similarly, to express exactness of this sequence as axioms of $\T_H^{op}$ we would have to enlarge the signature 
$\Sigma^{op}$ to include function symbols $h^n (Y\cap Z) \to h^{n+1} (X)$. We invite the interested reader to compare with $cd$-exactness in \cite[Def. 2.5.2]{BV}.

As already stated, setting $\T'=\T_H^{op}$ on the signature $\Sigma^{op}$ we obtain a universal category $\cA [\T_H^{op}]$, according with the notation adopted above. 

Nori's quiver $D^{\rm Nori}$ naturally arises from the sorts and function symbols of the signature $\Sigma^{op}$.
Recall that the quiver $D^{\rm Nori}$ corresponding to $\cC^\square$ is given by the vertices $(X,Y, n)$ for each $n\in\Z$ and edges $(X', Y', n)\to (X, Y, n)$ for each morphism $\square : (X, Y)\to (X', Y')$ in $\cC^\square$  and an additional edge $(Y, Z, n)\to (X, Y, n+1)$ for the morphism $\partial  : (Y,Z) \to (X,Y)$. We then get a representation
$$H: D^{\rm Nori}\to \cA \hspace*{0.5cm} (X,Y, n)\leadsto H^n (X,Y)$$
of Nori's quiver. This representation $H$ of $D^{\rm Nori}$ in  $\cA$ abelian yields a universal abelian category $\cA (H)$ (as indicated in the introduction, see \cite[\S 1]{BVP}) along with the canonical equivalence \eqref{motives} in the additive case. In fact, for any $\T^{op}$-model $H$ in $\cA$
along with its realisation functor $r_H$ we obtain a universal factorisation through the Serre quotient as follows (see \cite[Prop. 4.3.4]{BV} and \cite[Cor. 2.9]{BVP})
$$\xymatrix{\cA [\T^{op}] \ar@{->>}[r] \ar@/^1.7pc/[rr]^-{r_H} & \cA [\T_H^{op}] \ar@{^{(}->}[r]^-{} & \cA}$$
Note that the realisation $r_H$ is faithful exactly if the model $H$ is conservative, meaning that $\T_H^{op}$ and $\T^{op}$ have equivalent categories of models (compare with \cite[Cor. 2.8]{BVP}). This means that if a regular sequent is valid in the regular theory $\T_H^{op}$ of the model $H$ then it is already valid in $\T^{op}$, and conversely.

In particular,  the universal model $U$ in $\cA [\T^{op}]$, whose realisation is the identity functor, is conservative (see also \cite[Prop. 6.4]{Bu}), that is, the regular theory of the universal model is exactly $\T^{op}$ and we have (see \cite[Thm 2.7]{BVP})
$$\catN{U} \cong \cA[\T^{op}].$$

This also shows, as a consequence of the universal representation theorem of \cite{BVP}, that the category $\cA[\T^{op}]$ itself is a Serre quotient of Freyd's free abelian $\catF{D^\Nori}$ on the quiver $D^\Nori$.

\subsection{Good pairs}
Following Nori's original idea, for a cohomology $H: \cC^{\square}\to \cA$ where $(\cA, \tensor )$ we now assume to be a tensor abelian category (in the sense of \cite{DMT}), we consider those pairs $(X, Y)\in \cC^\square$ such that $H^{m}(X, Y)$ is non-zero in a single degree $n$ and zero otherwise. With the same assumptions and notations as in \cite{BVHP}, we shall further assume that $H^{n}(X, Y)$ belongs to a $\flat$-subcategory $\cA^\flat\subset\cA$.  Recall that such a $\flat$-subcategory is a full additive subcategory $\cA^\flat$ of $\cA$ such that all objects of $\cA^\flat$ are flat w.r.t.\,$\tensor$ and $\cA^\flat$ is closed under kernels (see \cite[Def. 1.7]{BVHP}). We may additionally assume that the objects of $\cA^\flat$ are projectives in $\cA$ (and this is coherent with the framework explained in \cite[Rmk. 1.10]{BVHP}) but this is not really needed.
\begin{defn}\label{gp}
Let $(\cA, \tensor)$ be an abelian tensor category, with a right exact tensor $\tensor$. Define the \emph{good pairs} for a model $H$ as above to be those $(X, Y)\in \cC^\square$ such that there exists an integer $n$ such that $H^{m}(X, Y)=0$ for $m\neq n$ and  $H^{n}(X, Y)\in \cA^\flat\subset\cA$. \end{defn}
\begin{defn}\label{gf} An  \emph{$\mathcal{M}$-filtration of dimensional type $d$} of $X\in \cC$ is a finite filtration by $\cM$-subojects $$\Phi: X=X_d \supset \cdots \supset X_p\supset X_{p-1}\supset \cdots \supset X_{-1}=\emptyset$$ 
 where $d\geq -1$ is an integer.
We say that such an $\mathcal{M}$-filtration $\Phi$ is \emph{good} if each $(X_p, X_{p-1})$ is a good pair with $H^{n}(X_p, X_{p-1})=0$ if $n\neq p$.
\end{defn} Clearly we can order (good)  $\mathcal{M}$-filtrations by inclusion at each level. In the same way we can also form the join of two $\mathcal{M}$-filtrations of the same dimensional type obtaining an $\mathcal{M}$-filtration but this is not necessarily preserving that the filtration be good. A dual version of \cite[Lemma 4.4.2]{BV} (without passing to ``lim'') yields the following converging \emph{coniveau spectral sequence} in $\cA$
$${ }^\Phi E_1^{p , q}(X) =  H^{p+q}(X_{p}, X_{p-1})\Rightarrow H^{p+q}(X)$$
 depending on the choosen $\mathcal{M}$-filtration $\Phi$. For a refinement of filtrations $\Phi\subseteq \Psi$ we obtain a morphism of spectral sequences from ${ }^\Psi E_1^{p , q}(X)$ to  ${ }^\Phi E_1^{p , q}(X)$. For $\Phi$ a $\mathcal{M}$-filtration on $X$, $\Phi^\prime$ a $\mathcal{M}$-filtration on $X^\prime$ and $f : X\to X'$ a morphism in $\cC$, which is compatible with the filtrations, i.e., it induces $(X_{p}, X_{p-1})\to (X^\prime_{p}, X^\prime_{p-1})$ in $\cC^\square$, we obtain a map of the spectral sequences ${ }^\Phi E_1^{p , q}(X^\prime) \to { }^{\Phi^\prime} E_1^{p , q}(X)$, the induced morphism on the abutments $H^{p+q}(X')\to H^{p+q}(X)$ being the one given the functoriality.

\subsection{Cellularity} Assume that we have a well defined notion of dimension,  for any object $X\in \cC$ an integer $\dim (X)=d$, and that we have good $\mathcal{M}$-filtrations of the same dimensional type $d$. In the key geometric examples the dimension exists, e.g., the dimension of a scheme of finite type over a field or the dimension of a CW complex. In general, we here simply assume that $X'\subseteq X$ implies $\dim (X')\leq \dim (X)$ and $\dim (\emptyset) = -1$.
\begin{defn}\label{cell}
 Say that the cohomology $H$ is \emph{cellular} with respect to $\cC^\square$ if the following assumptions and conditions are satisfied: {(i)}\, for any object $X\in\cC$ there exists a non-empty family of good  $\mathcal{M}$-filtrations of the same dimensional type $d = \dim (X)$, {(ii)}\, for any two $\mathcal{M}$-filtrations of $X$ there is a third good $\mathcal{M}$-filtration containing the join of the given ones and {(iii)}\, if $f : X\to X'$ is a morphism in $\cC$ then there is a good $\mathcal{M}$-filtration on $X'$ containing the direct image under $f$ of any given good $\mathcal{M}$-filtration of $X$.
 \end{defn}

Now let $\cC^\good\subseteq \cC^\square$ be the full subcategory of good pairs for $H$. We  have that $H$ is a model of $\T^{op}$ for the restricted signature $\Sigma^{op}_\good$ corresponding to $\cC^\good$. We shall denote by $H^\good$ this model.

 Let $D^\good\subset D^{\rm Nori}$ be the full subquiver given by  $\cC^\good$ so that the restriction of $H$ to $D^\good$ is a representation
 $$H^\good: D^\good\to \cA^\flat\subset \cA \hspace*{0.5cm} (X,Y, n)\leadsto H^n (X,Y)$$  in the subcategory $\cA^\flat\subset \cA$ as above. Because of cellularity, we shall see that a dual version of \cite[Lemma 4.4.4]{BV} applies here (without passing to ``lim''). We have that:

\begin{lemma}\label{lemma:goodpair}
If the cohomology $H$ is cellular then $$ \cA[\T_{H^\good}^{op}]\cong \catN{H^\good}\cong \catN{H}\cong \cA[\T_{H}^{op}]$$ are all equivalent abelian categories.
\end{lemma}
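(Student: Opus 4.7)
The outer equivalences $\cA[\T_{H^\good}^{op}]\cong \catN{H^\good}$ and $\catN{H}\cong \cA[\T_{H}^{op}]$ are direct instances of the canonical equivalence \eqref{motives}, applied respectively to $H^\good$ (as a cohomology on the restricted signature $\Sigma^{op}_\good$) and to $H$, so the content of the lemma reduces to the middle equivalence $\catN{H^\good}\cong \catN{H}$. The easy direction is immediate: restricting the universal representation $H\colon D^\Nori\to \catN{H}$ along the inclusion $D^\good\hookrightarrow D^\Nori$ gives a representation of $D^\good$ in $\catN{H}$ whose composite with $\catN{H}\hookrightarrow\cA$ is $H^\good$, and the universal property of $\catN{H^\good}$ then supplies an exact functor $F\colon \catN{H^\good}\to \catN{H}$ compatible with the realisations to $\cA$.

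For the opposite direction, the plan is to lift the full representation $H\colon D^\Nori\to\cA$ to an exact representation $\tilde H\colon D^\Nori\to \catN{H^\good}$ whose realisation is $H$. Given $X\in\cC$, cellularity~(i) supplies a good $\cM$-filtration $\Phi$; the good pairs $(X_p, X_{p-1})$ and the connecting morphisms $\partial$ between them all lie in $D^\good$, and so assemble in $\catN{H^\good}$ into a complex $C^\bullet_\Phi(X)$ whose realisation in $\cA$ is the $E_1$-row of the coniveau spectral sequence ${}^\Phi E_1^{p,q}(X)\Rightarrow H^{p+q}(X)$. Since ${}^\Phi E_1^{p,q}(X)=H^{p+q}(X_p,X_{p-1})$ vanishes for $q\neq 0$ by goodness, the spectral sequence collapses and $H^n(X)$ is the $n$-th cohomology of $C^\bullet_\Phi(X)$ in $\cA$. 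Accordingly one sets $\tilde H^n(X):=H^n(C^\bullet_\Phi(X))$ computed inside $\catN{H^\good}$; for a pair $(X,Y)\in \cC^\square$, cellularity~(ii) allows a good $\cM$-filtration of $X$ refining a good filtration of $Y$, and $\tilde H^n(X,Y)$ is defined analogously from the associated relative subquotient complex.

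The main obstacle is to verify that $\tilde H$ is well defined up to canonical isomorphism, functorial on $\cC^\square$, and natural with respect to $\partial$, so that it genuinely extends $H^\good$ to a $\T^{op}$-model on the full Nori quiver $D^\Nori$. Independence of the chosen good filtration uses cellularity~(ii): any two good filtrations admit a common good refinement, producing a morphism $C^\bullet_{\Phi'}(X)\to C^\bullet_\Phi(X)$ in $\catN{H^\good}$ whose realisation in $\cA$ is a quasi-isomorphism; since the realisation functor $\catN{H^\good}\to\cA$ is exact and faithful it reflects isomorphisms, so the induced comparison on cohomology is already an isomorphism in $\catN{H^\good}$. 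Functoriality in $X$ uses cellularity~(iii) to align good filtrations across any $f\colon X\to X'$, while naturality of $\partial$ and the long exact sequence of a triple come from a bicomplex built on simultaneously good filtrations. This is precisely the dual of \cite[Lemma~4.4.4]{BV}, with no passage to ``lim'' required because good filtrations are finite. Once $\tilde H$ is constructed, the universal property of $\catN{H}$ yields an exact functor $G\colon \catN{H}\to \catN{H^\good}$; by construction $F$ and $G$ restrict to the identity on the representations of $D^\good$, and since under cellularity the good-pair representatives generate both $\catN{H^\good}$ and (via the construction of $\tilde H$) also $\catN{H}$, $F$ and $G$ are mutually quasi-inverse, completing the proof.
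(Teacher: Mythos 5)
Your proposal is correct and follows essentially the same route as the paper: the easy functor from the inclusion $D^\good\subset D^{\rm Nori}$ via universality, then the quasi-inverse obtained by extending the universal representation of $D^\good$ to all of $D^{\rm Nori}$ using the complexes ${}^\Phi C^{*}_H$ built from good $\cM$-filtrations, with cellularity (i)--(iii) giving existence, independence of the filtration, and functoriality, exactly as in the dual of \cite[Lemma 4.4.4]{BV}. The only cosmetic difference is that the paper handles relative pairs $(X,Y)$ via the fiber (shifted mapping cone) of ${}^{\Psi}C^{*}_H(X)\to{}^{\Phi}C^{*}_H(Y)$ rather than a subquotient complex, which amounts to the same thing.
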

\begin{proof} Note that since $D^\good\subset D^{\rm Nori}$ we have that  
${\rm Ab} (D^\good) \hookrightarrow {\rm Ab} (D^{\rm Nori})$ is faithful and we therefore obtain faithful exact functors
$$F^\good :\cA[\T_{H^\good}^{op}]\cong\catN{H^\good}\hookrightarrow \catN{H}\hookrightarrow \cA$$ 
making use of \eqref{motives} for $H^\good$.

We further have that each $X\in \cC$ is provided with a good $\mathcal{M}$-filtration $\Phi$ of dimensional type $d= \dim (X)$ (by Definition \ref{cell} (i)); for any such good $\mathcal{M}$-filtration $\Phi$, $H^{p+q}(X_{p}, X_{p-1})=0$ for $q\neq 0$ and the coniveau spectral sequence ${ }^\Phi E_1^{p , q}(X)$ degenerates in $\cA$. We then have that $H^p(X)\in \cA$ is canonically isomorphic to the $p$-cohomology of the following complex ${ }^\Phi E_1^{* , 0}(X)= { }^\Phi C^{*}_H(X)\in D^b(\cA)$:
$$ \cdots \to H^{p-1}(X_{p-1}, X_{p-2})\to H^{p}(X_{p}, X_{p-1})\to H^{p+1}(X_{p+1}, X_{p})\to \cdots $$ where ${ }^\Phi C^{p}_H(X):= H^{p}(X_{p}, X_{p-1})$. Note that ${ }^\Phi C^{*}_H(X)$ is bounded in $[0, d]$ for $X\neq \emptyset$.   Moreover, for two good $\cM$-filtrations $\Phi$ and $\Phi^\prime$ there is a third good filtration $\Psi$ such that $\Phi \subseteq \Psi$ and $\Phi^\prime\subseteq \Psi$ so that the coniveau spectral sequence ${ }^\Psi E_1^{p , q}(X)$ maps to  both ${ }^\Phi E_1^{p , q}(X)$ and  ${ }^{\Phi^\prime} E_1^{p , q}(X)$ (by Definition \ref{cell} (ii));  therefore, we obtain cochain maps from ${ }^\Psi C^{p}_H(X)$ to both complexes ${ }^\Phi C^{p}_H(X)$ and ${ }^{\Phi^\prime} C^{p}_H(X)$ in such a way that the choice of two good $\mathcal{M}$-filtrations $\Phi$ and $\Phi^\prime$ yields a quasi-isomorphism ${ }^\Phi C^{p}_H(X)\cong { }^{\Phi^\prime} C^{p}_H(X)$ in $D^b(\cA)$.
If $f : Y\to X$ is a morphism and $\Phi$ is a good $\mathcal{M}$-filtration of $Y$ we pick a good $\mathcal{M}$-filtration $\Psi$ on $X$ which is compatible with $f$ (by Definition \ref{cell} (iii)), whence we obtain a cochain map ${ }^\Psi C^{*}_H(X)\to { }^\Phi C^{*}_H(Y)$ showing that the isomorphism $H^n({ }^{\dag}C^{*}_H( - ))\cong H^n( - )$ can be made functorial on $\cC$.  In particular, if $(X,Y)$ is an object of $\cC^\square$ we get $H^p({ }^{\Psi, \Phi}C^{*}_H(X,Y))\in \cA$, where ${ }^{\Psi, \Phi}C^{*}_H(X,Y)$ is the fiber (= the shifted mapping cone) of ${ }^{\Psi}C^{*}_H(X)\to { }^{\Phi}C^{*}_H(Y)$ and 
$H^n({ }^{\dag}C^{*}_H( - ))\cong H^n( - )$ can be made functorial on $\cC^\square$.

Thus the universal model and/or representation in $\cA[\T_{H^\good}]\cong\catN{H^\good}$ can be extended to a $\T_{H}$-model and/or representation of $D^{\rm Nori}$ compatibly with $H^\good$. By the universality we then easily obtain a faithful exact functor $F_H: \cA[\T_{H}]\cong\catN{H}\hookrightarrow \cA[\T_{H^\good}]\cong\catN{H^\good}$ providing a quasi-inverse of $F^\good$.
\end{proof}
\begin{remark}
Observe that the original argument due to Nori (c.f. \cite[Rem. 9.2.5]{HMS}) in the proof that singular cohomology is cellular with respect to $\cC$ being the category of affine schemes over a field $k$ and $\cM$ the subcategory of closed subschemes follows from the so called ``basic Lemma'' (see \cite[\S 2.5]{HMS}). The previous Lemma \ref{lemma:goodpair} is sufficient to detect Nori motives of all algebraic schemes because the inclusion of affine schemes in all algebraic schemes induces an equivalence on the corresponding categories of motives (e.g., see \cite[Thm. 9.2.22]{HMS}). Similarly, the same holds for Voevodsky motives (e.g., see \cite[Cor. 5.16]{CG}).  We leave it to the interested reader to determine general conditions on a cohomology theory $H$ which allow such extension from the affine case. For example, assuming that $I^+$-invariance holds for $H$ (see \cite[Def. 2.5.1 \& \S 3.8]{BV}) and the existence of an affine homotopy replacement one can adapt the arguments in the proof of \cite[Thm. 4.3.2]{FJ}.
\end{remark}

\section{K\"unneth axiom}
Here we express an axiom which corresponds to the usual K\"unneth formula. First of all we need to assume that $\cC$ has finite products $\times$ and a final object $1$ and that the distinguished monomorphisms in $\cM$ are stable under products. Moreover, we shall assume distributivity of joins with respect to products, i.e., for $\cM$-subojects $Y, Z\subset X'$ we assume that $X\times (Y\cup Z) = X\times Y \cup  X\times Z$.  We have that $\cC^\square$ is provided with a product, as follows
$$(X, Y)\times (X', Y') := (X \times X', X \times Y' \cup Y\times X')$$
Also assume that we have a cohomology satisfying excision in an abelian tensor category with a right exact tensor.

\subsection{External product} Fixing  $(\cA, \tensor)$ an abelian tensor category (with a right exact tensor) we are now going to describe a \emph{$\otimes$-model $H$} in $(\cA, \tensor)$ or a cohomology $H$ provided with an \emph{external product $\kappa^H$} by the following data and conditions.  We assume given a cohomology $H :\cC\to \cA$ together with a morphism
$$\kappa_{n,n'}^H : H^n(X,Y)\otimes H^{n'}(X',Y')\to H^{n+n'}(X \times X', X \times Y' \cup Y\times X')$$
for all objects $(X, Y)$ and $(X', Y')$  in $\cC^\square$ and $n, n'\in \Z$.
Note that $\kappa^H$ is providing, as usual, by composition with the diagonal $\Delta : (X, Y\cup Z)\to   (X \times X, X \times Y \cup Z\times X)$ a cup product $$\cup : H^n(X,Y)\otimes H^{m}(X,Z)\to H^{n+m}(X,Y\cup Z)$$ on cohomology and conversely. We shall assume that $\kappa^H$ satisfies the following axioms:
\begin{itemize}
\item[Ax.0] $\kappa_{}^H$ is compatible with the associativity and commutativity constraints given by the product in $\cC^\square$,
\item[Ax.1] $\kappa_{}^H$ is natural in both variables with respect to morphisms in $\cC^\square$,
\item[Ax.2] for $Z\subseteq Y \subseteq X$ and $Y'\subseteq X'$ the following diagram 
\[\begin{xy}\xymatrix{
H^n(Y,Z)\otimes H^{n'}(X',Y')\ar[r]^{\kappa_{}^H\ \ }\ar[d]_{\partial^n\tensor \id}&H^{n+n'}(Y \times X', Y \times Y' \cup Z\times X')\ar[d]^{\delta^{n+n'}}\\
 H^{n+1}(X,Y)\otimes H^{n'}(X',Y')\ar[r]^{\kappa_{}^H\ \ }&H^{n+n'+1}(X \times X', X \times Y' \cup Y\times X')
}\end{xy}\] commutes with sign $(-1)^{n'}$, where $\delta^{k}$ for $k= n+n'$ is the composition of the following three morphisms:
\begin{itemize}
\item[{\it i)}] the morphism $H^{k}(Y \times X', Y \times Y' \cup Z\times X') \to H^{k}(Y \times X', Y \times Y')$ induced by functoriality 
\item[{\it ii)}] the inverse of $\Delta^k$, the excision isomorphism  
$$H^{k}(Y \times X', Y \times Y')\stackrel{\simeq}{\longleftarrow} H^{k}(X \times Y'\cup Y \times X', X \times Y')$$
\item[{\it iii)}] the morphism $\partial^k : H^{k}(X \times Y'\cup Y \times X', X \times Y')\to H^{k+1}(X \times X', X \times Y' \cup Y\times X')$ 
\end{itemize}
\item[Ax.3] and the following commutes
\[\begin{xy}\xymatrix{
H^{n'}(X',Y')\tensor H^n(Y,Z)\ar[r]^{\kappa_{}^H\ \ }\ar[d]_{\id \tensor \partial^n}&H^{n'+n}(X' \times Y ,
X'\times Z \cup Y' \times Y )\ar[d]^{\delta^{n'+n}}\\
H^{n'}(X',Y')\otimes H^{n+1}(X,Y)\ar[r]^{\kappa_{}^H\ \ }&H^{n'+n+1}(X' \times X, X'\times Y\cup Y' \times X).
}\end{xy}\]
where $\delta^{k}$ for $k= n'+n$ is now induced by the composition of the following three morphisms:
\begin{itemize}
\item[{\it i)}] the morphism $H^{k}(X' \times Y ,
X'\times Z \cup Y' \times Y )\to H^{k}(X' \times Y, Y' \times Y)$ induced by functoriality 
\item[{\it ii)}] the inverse of $\Delta^k$, the excision isomorphism  
$$H^{k}(X' \times Y, Y' \times Y)\stackrel{\simeq}{\longleftarrow} H^{k}(X' \times Y\cup Y' \times X, Y' \times X)$$
\item[{\it iii)}] the morphism $\partial^k : H^{k}(X' \times Y\cup Y' \times X, Y' \times X)\to H^{k+1}(X' \times X, X'\times Y\cup Y' \times X)$ 
\end{itemize}

\end{itemize}
We further assume that there is an isomorphism $\varepsilon :H^0(1,\emptyset ) \cong \unit$ with the unit of the tensor structure in such a way that
\begin{itemize}
\item[Ax.4] $H^0(1, \emptyset )\otimes H^n(X, Y)\xrightarrow{\kappa} H^n(1 \times X, 1\times Y\cup \emptyset\times X)\xrightarrow{u^*} H^n(X, Y)$
is the canonical isomorphism $\unit \tensor H^n(X, Y)\cong H^n(X, Y)$ via $\varepsilon$, where $u^*$ is induced by the canonical isomorphism $u: (X, Y) \to (1 \times X, 1\times Y\cup \emptyset\times X)$ (note that here we have that $\emptyset\times - = \emptyset$ as $\emptyset$ is strict initial).
\end{itemize}

\begin{remark} We could consider a theory $\T^\tensor$ which is the extension of the theory $\T^{op}$ on a new signature $\Sigma^\tensor$ containing $\Sigma^{op}$,  such that the $h^n(X, Y)\otimes h^{n'}(X', Y')$ are additional sorts, requiring that they are abelian groups and the $\kappa_{n,n'} : h^n(X,Y)\otimes h^{n'}(X',Y')\to h^{n+n'}(X \times X', X \times Y' \cup Y\times X')$
 are function symbols which are homomorphisms for each $n, n'\in \Z$.

In that case we would have to assume that $h^n(X, Y)\otimes h^{n'}(X', Y')$ are functorial in each variable and also introduce other sorts and function symbols in order to express the above axioms and the assumption that $h^0(1, \emptyset )$ is a unit.  For example, we would need to add function symbols for each variable of $-\tensor +$ corresponding to the function symbols of $\Sigma$, that is, when $\square^n$ is the function symbol associated to a morphism $\square$, or $\partial^n$ for $\partial$, and for the identity function symbol $\boxdot^{m}$ associated to the identity in $\cC^\square$ (refer to \cite{BV} for notation).

Note that we could then also include axioms expressing the strong K\"unneth formula, i.e. that the $``\oplus" \kappa_{i, n-i}$ are isomorphisms, by regular sequents. For example  the surjectivity condition can be easily  expressed by the regular sequent $$\top\vdash_{y} y =y \rightarrow  \exists \overline{x} (\kappa_{0, n}(x_0) + \kappa_{1, n-1}(x_1) + \cdots +\kappa_{n, 0}(x_n)=y).$$

However, few models actually satisfy the strong K\"unneth formula and even in those cases it is not clear how to provide a tensor structure on $\T^{op}$-motives going through $\T^\tensor$-motives. However, it appears to be interesting to understand the differences between the corresponding syntactic categories: the regular syntactic category for $\T^{op}$ on the signature $\Sigma^{op}$ which yields $\cA[\T^{op}]$ and the  exact completion of the syntactic category for $\T^\tensor$ on the extended signature $\Sigma^\tensor$.
\end{remark}
\subsection{Nori's graded $\tensor$-quiver}
For $(X, Y, n)$ and $(X', Y', n')$ in $D^\Nori$ we set
$$(X, Y, n)\otimes (X', Y', n') := (X \times X', X \times Y' \cup Y\times X', n+n')$$
which is a vertex of $D^\Nori$. The grading $| \cdot | : D^\Nori\to \Z_2$ given by  $|(X, Y,n)| =n$ modulo 2 can be considered here as in \cite[\S 2]{BVHP}. We have, following the notation of that reference, the arrows, which we denote $\alpha$, $\beta$, $\beta'$, for expressing the commutativity and associativity constraints, the unit $\unit = (1, \emptyset , 0)$ and arrows $u$ for expressing its properties all given by the canonical choices in $\cC^\square$.
We shall denote the graded Nori $\tensor$-quiver by $D^{\tensor} = (D^{\rm Nori}, \id,\tensor, \alpha,\beta,\beta',\unit,u)$ with the relations listed in \cite[Def. 2.11 \& Def. 2.12]{BVHP}.

\subsection{K\"unneth formula} Let $H$ be a cohomology in an abelian tensor category $\cA$ with an external product $\kappa^H$. Following Nori's original idea we can actually make use of the K\"unneth formulas in order to get a graded $\tensor$-representation of Nori's $\tensor$-subquiver $D^\good$ which consists of the good pairs for the cohomology $H$. We have a canonical comparison map
$$\oplus \kappa_{i, n-i}^H : ``\bigoplus_{i\in \Z}{}" H^i(X,Y)\otimes H^{n-i}(X',Y')\to H^{n}(X \times X', X \times Y' \cup Y\times X')$$
Clearly, for good pairs, we have that only a single pair $(i, n-i)$ of degrees gives a non-zero component in the sum  $``\oplus "\kappa_{i, n-i}^H$.
\begin{defn}\label{kf}
We say that a cohomology $H$ in $\cA$, provided with an external product $\kappa^H$, satisfies the \emph{K\"unneth formula} if, for any $n, n'\in \Z$ and for all  pairs $(X, Y)$ and $(X', Y')$ in $\cC^\good$, we have that
$$\kappa_{n, n'}^H : H^n(X,Y)\otimes H^{n'}(X',Y')\to H^{n+n'}(X \times X', X \times Y' \cup Y\times X')$$
is an isomorphism and
\[\begin{xy}\xymatrix{
H^{n}(X,Y)\otimes H^{n'}(X',Y')\ar[r]^{\kappa_{}^H\ \ }\ar[d]_{}&H^{n+n'}(X \times X', X \times Y' \cup Y\times X')\ar[d]^{\alpha^*}\\
H^{n'}(X',Y')\otimes H^{n}(X,Y)&H^{n'+n}(X' \times X, X'\times Y\cup Y' \times X)\ar[l]^{(\kappa_{}^H)^{-1}\ \ }
}\end{xy}\]
commutes with sign $(-1)^{nn'}$.
\end{defn}

\begin{lemma}\label{lemma:kunneth}
If $H :\cC\to \cA$ is a cohomology provided with an external product $\kappa^H$ that satisfies the K\"unneth formula, then  $D^\good$ is a graded $\tensor$-subquiver of $D^{\rm Nori}$ and $H^\good$ is a graded $\tensor$-representation of  $D^\good$ in $\cA^\flat\subset\cA$ via the external product $\kappa^H$. 
\end{lemma}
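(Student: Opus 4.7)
The plan is to verify the two assertions in turn: first that $D^\good$ is closed under the $\tensor$-operation of $D^\Nori$ (and contains the unit and structural arrows), then that $(H^\good,\kappa^H)$ satisfies the axioms of a graded $\tensor$-representation as formulated in \cite[Def. 2.11 \& Def. 2.12]{BVHP}.

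For the first part, I would take good pairs $(X,Y)$ and $(X',Y')$ concentrated in degrees $n_0$ and $n_0'$ respectively. For each $(i,j)\in\Z^2$ the K\"unneth morphism
$$\kappa^H_{i,j} : H^i(X,Y)\otimes H^{j}(X',Y')\xrightarrow{\sim} H^{i+j}(X\times X', X\times Y'\cup Y\times X')$$
is an isomorphism by Definition \ref{kf}, and its source is zero unless $(i,j)=(n_0,n_0')$. Hence the cohomology of the product pair vanishes in all total degrees except $n_0+n_0'$, where it equals $H^{n_0}(X,Y)\otimes H^{n_0'}(X',Y')$. This tensor product is flat (tensoring by it factors as two exact functors, one for each flat factor), so it belongs to $\cA^\flat$; therefore the product pair is good of degree $n_0+n_0'$. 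The unit vertex $\unit=(1,\emptyset,0)$ lies in $D^\good$ via the iso $\varepsilon:H^0(1,\emptyset)\cong\unit$ appearing in Ax.4, and the structural arrows $\alpha,\beta,\beta',u$ are induced by morphisms in $\cC^\square$ between products of good pairs and therefore remain in $D^\good$.

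For the second part, I would match each defining axiom of a graded $\tensor$-representation from \cite[Def. 2.11 \& Def. 2.12]{BVHP} with its counterpart for $\kappa^H$: Ax.1 supplies naturality in $\cC^\square$-morphisms; Ax.2 and Ax.3 give the Leibniz-type compatibility with the $\partial$-edges of $D^\Nori$, with the prescribed sign $(-1)^{n'}$ matching the graded-representation sign convention; Ax.0 together with the signed commutative square of Definition \ref{kf} gives the compatibility with the associativity and commutativity constraints $\alpha,\beta,\beta'$ (the $(-1)^{nn'}$-twist of $\beta$ being exactly the one supplied by the K\"unneth square); Ax.4, via $\varepsilon$, gives unit coherence for the $u$-arrows. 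Restricted to $D^\good$, where each cohomology object is concentrated in a single degree and the $\kappa^H_{n,n'}$ are isomorphisms, these axioms assemble into a well-defined graded $\tensor$-representation landing in $\cA^\flat$.

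The main obstacle is purely bookkeeping: one must check that the signs in Ax.2--Ax.3 and in the commutativity square of Definition \ref{kf} are exactly the signs demanded by the graded $\tensor$-representation axioms of \cite[\S 2]{BVHP}, and that the unit conditions from Ax.4 feed correctly into the $u$-arrow relations of $D^\tensor$. No further ingredients beyond the K\"unneth isomorphism and the postulated axioms of $\kappa^H$ are needed; the content of the lemma is the observation that the external-product axioms, restricted to good pairs, repackage verbatim as the definition of a graded $\tensor$-representation of the restricted $\tensor$-subquiver $D^\good$.
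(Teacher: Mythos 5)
Your proof is correct and follows essentially the same route as the paper, whose own proof merely asserts that $(X\times X', X\times Y'\cup Y\times X', n+n')$ is a good pair and that $H^\good$ is a graded $\tensor$-representation ``by construction''; your write-up simply makes those implicit verifications explicit. One small caution: membership of $H^{n_0}(X,Y)\otimes H^{n_0'}(X',Y')$ in $\cA^\flat$ does not follow from flatness alone (the paper's recollection of \cite[Def.~1.7]{BVHP} only says that all objects of $\cA^\flat$ are flat, not that all flat objects lie in $\cA^\flat$), so you should instead invoke the closure of the $\flat$-subcategory under $\tensor$ from \cite{BVHP}.
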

\begin{proof} In fact, $(X, Y, n)\otimes (X', Y', n')= (X \times X', X \times Y' \cup Y\times X', n+n')$ is a good pair and $H^\good$ is a graded $\tensor$-representation (see \cite[Def. 2.15]{BVHP}) by construction.
\end{proof}
Let us summarise the assumptions on the category $\cC$ in order to state our main result. Making reference to \cite[\S 4.4]{BV} for the explanation of some terminology we assume that
\begin{itemize}
\item[(a)] the category $\cC$ is provided with a final object $1$, products $X \times X'$ for $X, X'\in \cC$ and a strict initial object $\emptyset \in \cC$;  

\item[(b)] there is a subcategory $\cM$ of distinguished monomorphisms which are stable under products and such that: 
\begin{itemize}
\item[(b.1)] $\cM$ is saturated,  contains all isomorphisms $Y\cong X$ and all $\emptyset\to X$ for $X\in \cC$, 
\item[(b.2)] for $\cM$-subojects $Y\subset X$ and $Y'\subset X'$ in $\cM$ and any morphism $f : X\to X'$ in $\cC$ there is a direct and inverse image, respectively $f_*(Y)\subset X'$ and $f^*(Y')\subset X$ in $\cM$, and 
\item[(b.3)] there are joins $Y\cup Z\subset X$ of $Y\subset X$ and $Z\subset X$ of $\cM$-subojects and they are distributive with respect to products; finally, we assume that
\end{itemize}
\item[(c)] there is a dimension function $\dim : {\rm Ob}\, \cC \to \Z$ such that $\dim  (\emptyset ) = -1$ and $X'\subseteq X$ in $\cM$  implies $\dim (X')\leq \dim (X)$.
\end{itemize}
Note that in the concrete geometric categories $\cC$ endowed with a closure operator the subcategory $\cM$ can be given by the subcategory of closed monomorphisms.

\begin{thm}\label{thm:Tmot} Let $\cC$ be a category and $\cM$ a subcategory, satisfying the assumptions and conditions (a), (b) and (c) here above. Let $H :\cC^{\square}\to \cA$ be a cohomology taking values in an abelian tensor category $\cA$ with a right exact tensor and let $\cA^\flat\subset\cA$ be a $\flat$-subcategory. If $H$ is cellular with respect to $\cC^\square$ and $\cA^\flat$ (see Definition \ref{cell}), and it is provided with an external product and satisfies the K\"unneth formula (see Definition \ref{kf}) then the abelian category of motives ${\sf ECM}_\cC^H$ associated to $\cC$ and $H$ is provided with a canonical tensor structure such that the faithful exact functor ${\sf ECM}_\cC^H\to \cA$ is a tensor functor.
\end{thm}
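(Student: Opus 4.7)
The proof plan is to reduce everything to a direct application of \cite[Thm. 2.18]{BVHP} after funneling the problem through the good-pairs subquiver $D^\good$.

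First I would chain together the three equivalences of abelian categories already at our disposal. By the definition \eqref{Nori} together with \eqref{motives} we have ${\sf ECM}_\cC^H = \cA[\T_H^{op}] \cong \catN{H}$, and Lemma~\ref{lemma:goodpair}, whose hypotheses are exactly the cellularity assumption, extends this to
$$
{\sf ECM}_\cC^H \;\cong\; \catN{H} \;\cong\; \catN{H^\good} \;\cong\; \cA[\T_{H^\good}^{op}].
$$
So the task is reduced to producing a canonical tensor structure on $\catN{H^\good}$ and then transporting it back through this chain of equivalences.

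Second, I would feed the Künneth hypothesis into \cite[Thm. 2.18]{BVHP}. By Lemma~\ref{lemma:kunneth}, the Künneth isomorphisms $\kappa^H_{n,n'}$ together with the coherence conditions Ax.0--Ax.4 promote $D^\good$ to a graded $\tensor$-subquiver of $D^\tensor$ and turn $H^\good$ into a graded $\tensor$-representation of $D^\good$ with values in the $\flat$-subcategory $\cA^\flat\subset \cA$. This is precisely the input required by \cite[Thm. 2.18]{BVHP}, which in turn equips the universal abelian category $\catN{H^\good}$ with a canonical right-exact tensor structure and provides a factorisation of $H^\good$ through a tensor functor $\catN{H^\good}\to \cA$. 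Transporting this structure through the equivalences of the first step yields the desired tensor structure on ${\sf ECM}_\cC^H$, and identifies the realisation ${\sf ECM}_\cC^H \to \cA$ as the composition of this tensor functor with an equivalence of abelian categories, hence as a tensor functor.

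The main obstacle is the consistency of the equivalence $\catN{H} \cong \catN{H^\good}$ of Lemma~\ref{lemma:goodpair} with the tensor data. Concretely, the extension of the universal model from $D^\good$ to $D^\Nori$ is built via the coniveau spectral sequence along good $\cM$-filtrations, and one must verify that different choices of good filtrations yield tensor-compatible identifications, and that the product of filtrations used to compute $H^{p+q}(X\times X')$ matches the product of the individual filtrations on $X$ and $X'$. Here the hypotheses that $\cM$ is stable under products, that joins distribute over products (condition (b.3)), and cellularity conditions (ii)--(iii), are exactly what is needed: they ensure that for good filtrations $\Phi$ on $X$ and $\Phi'$ on $X'$ the product filtration $\Phi\times\Phi'$ refines to a good filtration on $X\times X'$ whose associated graded matches the Künneth tensor product of the associated gradeds of $\Phi$ and $\Phi'$. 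Granted this compatibility, the naturality portion of \cite[Thm. 2.18]{BVHP} forces the tensor structure on $\catN{H}$ to be unique and canonical, and the realisation is a tensor functor by its very construction.
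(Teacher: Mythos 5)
Your proposal is correct and follows essentially the same route as the paper's (very terse) proof: identify ${\sf ECM}_\cC^H$ with $\catN{H^\good}$ via Lemma~\ref{lemma:goodpair} and then apply \cite[Thm 2.18]{BVHP} to the graded $\tensor$-representation supplied by Lemma~\ref{lemma:kunneth}. Your final paragraph's worry about tensor-compatibility of the equivalence is not actually an obstacle, since the tensor structure on ${\sf ECM}_\cC^H$ is simply defined by transport of structure along that equivalence.
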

\begin{proof} According with the notation introduced in \eqref{Nori} we here just use $\catN{H^\good} \cong \cA[\T_H]$ by Lemma \ref{lemma:goodpair}. In fact, Lemma \ref{lemma:kunneth} yields that the subdiagram $D^\good\subset D^{\rm Nori}$ given by good pairs along with $H^\good: D^\good\to \cA^\flat$ satisfies the assumptions of the universal graded $\tensor$-representation theorem \cite[Thm 2.18]{BVHP}.
\end{proof}

\subsection{Applications}
All this clearly applies to the well known case of $H$ being cellular cohomology on the category $\cC$ of CW complexes, canonically filtered by $n$-skeletons. Moreover, it applies to $H$ being singular cohomology on the category $\cC$ of affine algebraic $k$-schemes for $k$ a subfield of the complex numbers $\C$; the singular cohomology is cellular because of Nori's basic lemma and we can apply Theorem \ref{thm:Tmot} where $\cA$ is the category of abelian groups and where $\cA^\flat$ is the subcategory of free abelian groups (see also \cite[Thm. 4.5]{BVHP}).  Similarly, the interested reader can see that our formalism applies to \cite{Ar}. Note that the representation of singular cohomology in the abelian tensor category $\cA = {\rm MHS}$ of mixed Hodge structures yields back Nori's original category: as soon as we consider the relative case, considering Betti representation in Saito's mixed Hodge modules $\cA = {\rm MHM}$ we get Ivorra's perverse Nori motives \cite{I1}.

Following \cite{FJ}, for pairs $(X, f)$ where $X$ is an algebraic variety defined over a subfield $k$ of $\C$ and $f:X\to \C$ a regular function consider $H^n(X, f)$ the rapid decay cohomology; for $\cC$ the category of pairs $(X, f)$ with $X$ affine this is yielding a $\T^{op}$-model $H$ in the category $\cA$ of finite dimensional $\Q$-vector spaces; one has an exponential basic lemma and a K\"unneth formula (see \cite[Chap. 3-4]{FJ}) in such a way that our assumptions in Theorem \ref{thm:Tmot} are satisfied by rapid decay cohomology, yielding the desired tensor structure on exponential motives. Similarly, it appears that our construction applies in the context of hypergeometric motives \cite{NP} as well. 

Finally, as a conjectural application, one aims at constructing a ``sharp" singular cohomology $H$ on a suitable category $\cC$ based on algebraic varieties (e.g., see \cite[\S 0.2]{BB} for more details) and where now $\cA =$ FHM, formal Hodge structures, or EHM, enriched Hodge structures, or, lastly, MHSM, mixed Hodge structures with modulus (see \cite{IY} for work in this direction). One seeks a ``sharp" singular cohomology $H$ that satisfies the hypotheses in Theorem \ref{thm:Tmot} in such a way that we would get an abelian tensor category of ``sharp" Nori motives (or motives with modulus).

\end{document}